\newcommand{\p}{\partial}
\newcommand{\dd}{\,\mathrm{d}}
\newtheorem{lemma}[thrm]{Lemma}
\begin{document}

\title{Relative entropy for the numerical diffusive limit of the linear Jin-Xin system}
\author{Marianne Bessemoulin-Chatard}\address{Nantes Université, CNRS, Laboratoire de Mathématiques Jean Leray, LMJL,
UMR 6629, F-44000 Nantes, France ; \email{marianne.bessemoulin@univ-nantes.fr}}
\author{H\'el\`ene Mathis}\address{Université de Montpellier, Institut Montpellierain Alexander Grothendieck, IMAG, UMR 5149, F-34000 Montpellier, France ; \email{helene.mathis@umontpellier.fr}}

\begin{abstract} 
{This paper deals with the diffusive limit of the Jin and Xin model and its approximation by an asymptotic preserving finite volume scheme. At the continuous level, we determine a convergence rate to the diffusive limit  by means of a relative entropy method. Considering a semi-discrete approximation (discrete in space and continuous in time), we adapt the method to this semi-discrete framework and establish that the approximated solutions converge towards the discrete convection-diffusion limit with the same convergence rate.}
\end{abstract}

\maketitle

\section{Introduction}
Jin and Xin introduced in \cite{JinXin1995} a relaxation technique in order to build
robust numerical schemes for the Cauchy problem associated with the nonlinear scalar equation
\begin{equation}
\p_t u +\p_x f(u)=0,\label{eq:scal}
\end{equation}
where $f$ is typically a Lipschitz-continuous and nonlinear function.
Relaxation  consists of augmenting the equation into a
system which reads 
\begin{align}
    &\partial_t u^\nu+\partial_x v^\nu=0,\label{JX-1}\\
    &\partial_t v^\nu+\lambda^2\partial_x u^\nu=\dfrac{1}{\nu}(f(u^\nu)-v^\nu), \label{JX-2}
\end{align}
where $u^\nu,\,v^\nu :\mathbb{R}^+\times\mathbb{R}\rightarrow\mathbb{R}$ are the unknowns,
$\lambda>0$ is a given constant and $\nu$ is a relaxation parameter.
The hyperbolic part of \eqref{JX-1}-\eqref{JX-2}  is linear, of wave velocities
$\pm \lambda$. The nonlinearity of $f$ is shifted to the right hand side of the second equation.
Formally, as $\nu$ tends to zero, one observes that the second equation gives $v^\nu=f(u^\nu)$ and then solutions to \eqref{JX-1}-\eqref{JX-2} converge formally to the solution of \eqref{eq:scal}.
The question of the convergence of solutions of $u^\nu$ towards a weak entropy solution $u$ of \eqref{eq:scal} has been addressed, for example, in \cite{CCL94,Natalini96,Serre2000}.
The latter reference includes the
proof of the existence of invariant domains for the relaxed system \eqref{JX-1}-\eqref{JX-2} as long as they exist for the
the scalar equation \eqref{eq:scal}. Moreover any convex entropy $\eta\in C^2(\mathbb R)$ of the scalar equation \eqref{eq:scal} 
extends to an entropy $E\in C^2(\mathbb R^2)$ for the relaxed system \eqref{JX-1}-\eqref{JX-2} . These two ingredients
make it possible to prove, by a method of compensated compactness, 
convergence in long time to the scalar equation for arbitrary initial data.

In the present paper we focus on a slightly different scaling of the original Jin and Xin model.
The system we consider reads
\begin{align}
    &\partial_t u+\partial_x v=0,\label{JinXin-1}\\
    & \varepsilon^2\partial_t v+\lambda^2\partial_x u=au-v, \label{JinXin-2}
\end{align}
where $\varepsilon$ is the relaxation parameter. Here we consider the linear case $f(u)=au$, where $a$ is a given coefficient.
This system is endowed with an entropy-flux pair $(E,F)$ which
complies with the entropy inequality
\begin{equation}\label{ineq_EF_cont}
    \partial_t E(u,v)+\partial_x F(u,v)\leq -(au-v)^2.
\end{equation}
In the specific case of a linear relaxation, the entropy function reads
\begin{equation}\label{def_E}
    E(u,v)=\frac{\lambda^2}{2}u^2+\frac{\varepsilon^2}{2}v^2-\varepsilon^2 a u v,
\end{equation}
and the entropy flux $F$ is defined by
\begin{equation}\label{def_F}
    F(u,v)=-\frac{\lambda^2 a}{2}u^2-\frac{\varepsilon^2 a}{2}v^2+\lambda^2uv.
\end{equation}
Assuming the subcharacteristic condition \cite{Whitham74}
\begin{equation}\label{cond_sous_car}
    \lambda>\varepsilon\,|a|,
\end{equation}
the entropy functional $E(u,v)$ is strictly convex in the sense that there exists $\beta_1\geq \beta_0>0$ such that
\begin{equation}\label{E_strict_convex}
    \text{spec}(\nabla^2E)\subset [\beta_0,\beta_1].
\end{equation}
In this article, we focus on the diffusion limit of  the solutions $w=(u,v)$ of \eqref{JinXin-1}-\eqref{JinXin-2}.
Indeed in the limit $\varepsilon \to 0$, 
the solutions $w=(u,v)$ of \eqref{JinXin-1}-\eqref{JinXin-2} converge, in
a sense to be prescribed, to the solutions $\bar w=(\bar u,\bar v)$ of the
following convection-diffusion equation
\begin{align}
    & \p_t \bar u+a\p_x \bar u=\lambda^2\p_{xx}\bar u,\label{limit_1}\\
    & \bar v=a \bar u-\lambda^2\p_x \bar u. \label{limit_2}
\end{align}

In \cite{JinLiu1998}, the authors exhibited a convergence result as the
relaxation parameter $\varepsilon$ tends to zero, using a priori
estimates in appropriate Sobolev spaces, for initial data close to ones
producing a travelling-wave solution. 
Very recently Crin-Barat and Shou \cite{Crin-Barat2023} have studied the diffusive relaxation limit of
the system \eqref{JinXin-1}-\eqref{JinXin-2} 
toward viscous conservation laws \eqref{limit_1}-\eqref{limit_2} in the
multi-dimensional setting. They prove global well-posedness of strong
solutions for initial data close to constant state in suitable Besov
spaces.

Adapting the technique in \cite{Bouchut99}, Bianchini considered in
\cite{Bianchini18} the
diffusive relaxation process of the Jin-Xin model in terms of BGK type
approximations. She established the convergence toward a nonlinear heat
equation in the relaxation limit and {managed to derive a convergence rate in $O(\sqrt{\varepsilon})$ in the $L^2$ norm.} 
However, by applying a numerical scheme preserving the
hyperbolic-parabolic asymptotics proposed
in \cite{JPT98} to the model \eqref{JinXin-1}-\eqref{JinXin-2}, a rate of convergence in
$\varepsilon^2$ is observed. Figure \ref{fig:linear} presents the numerical
evidences, the details of the numerical scheme and the test case being
given in Section \ref{sec:num}.

\begin{figure}
\centering
\includegraphics[width=0.75\linewidth]{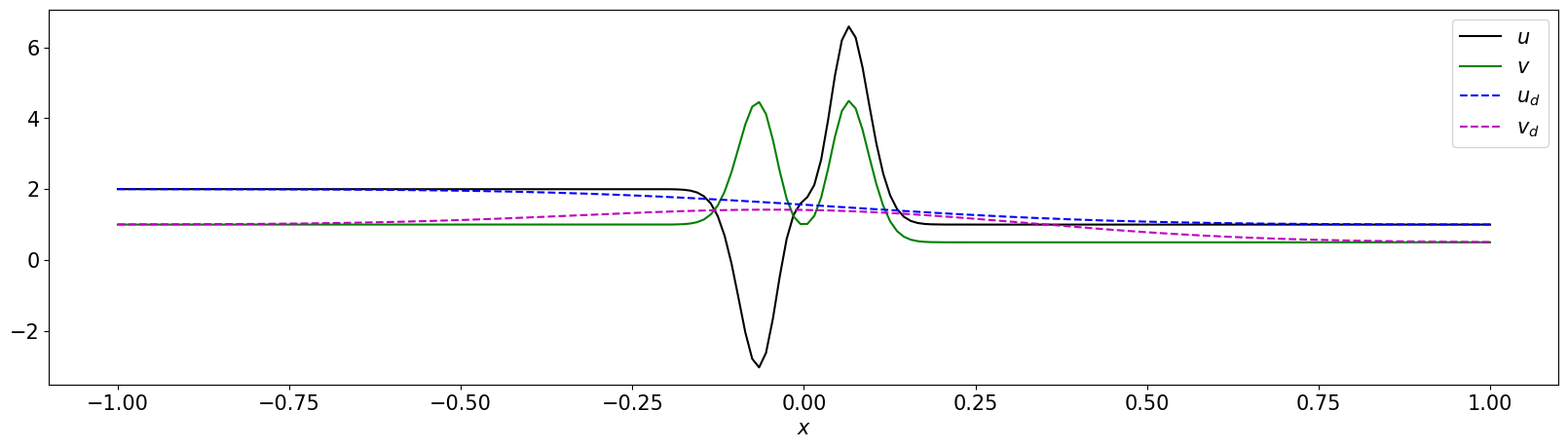}

\includegraphics[width=0.75\linewidth]{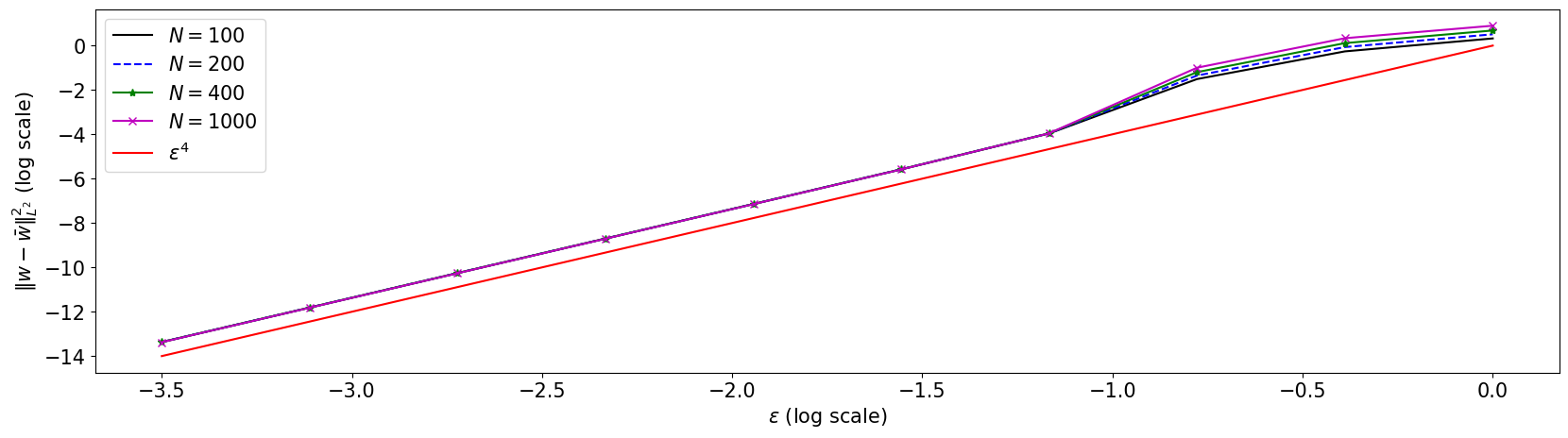}
\caption{Linear test case. Top: profile of $(u,v)$ in space, compared to $(\bar u,\bar v)$.
  Bottom: $L^2$ norm of the error $\|(u,v)-(\bar u, \bar v)\|_{L^2(Q_T)}^2$ with respect to $\varepsilon$ in log scale.}
\label{fig:linear}
\end{figure}

Our present work is motivated by the observation of this different convergence rate in numerical simulations.
It turns out that we already have observed similar numerical rates of
convergence when studying the convergence of the discrete solutions of
the $p$-system with damping towards the discrete solutions of the porous media
equation \cite{BBM16,Bulteau19}.
The numerical analysis, performed in the latter reference, is an
adaptation at the discrete level of 
the 
relative entropy method used by Lattanzio and Tzavaras
in \cite{Lattanzio2013} to prove the convergence in the continuous
setting.
The key tool is the relative entropy functional, which behaves as a squared $L^2$ norm of the difference between a
solution of the $p$-system and a solution of the porous media
equation.
Actually the relative entropy method has been applied successfully to a large
number of problems concerning hyperbolic systems, for instance: adaptation of the weak-strong uniqueness
results of Dafermos \cite{Daf79} and Di Perna \cite{DiP79} to solutions of hyperbolic systems of
conservation laws with weaker regularity
\cite{Ghoshal21, vasseur23}, uniqueness of measure-valued
solutions to hyperbolic-parabolic systems
\cite{Christoforou18,Christoforou21}, asymptotic stability of
stationary solutions to hyperbolic systems with
singular geometry terms and nonconservative products \cite{Seguin18}. Besides in \cite{Bianchini21}
Bianchini also makes use of the relative entropy method as well and 
applies it to an intermediate BGK-type model.

Our purpose is to make use of the relative entropy technique to
establish the convergence of the solutions of  the system
\eqref{JinXin-1}--\eqref{JinXin-2}  towards the solutions of
\eqref{limit_1}--\eqref{limit_2}, both at the continuous and the
discrete level. The relative entropy $E(w|\bar w)$ of the system
\eqref{JinXin-1}--\eqref{JinXin-2} is defined as the first order Taylor expansion
of $E$ around a smooth solution $\bar w$ of
\eqref{limit_1}--\eqref{limit_2}:
\begin{align}
    E(w|\bar w)&=E(w)-E(\bar w)-\nabla E(\bar w)\cdot (w-\bar w) \nonumber\\
    &= \frac{\lambda^2}{2}(u-\bar u)^2+\frac{\varepsilon^2}{2}(v-\bar v)^2-\varepsilon^2a(u-\bar u)(v-\bar v)\label{ER_cont}
\end{align}
for $w$ a classical solution of
\eqref{JinXin-1}--\eqref{JinXin-2}. Thanks to the strict convexity  \eqref{E_strict_convex} of
$E$, the relative entropy behaves like a squared $L^2$
norm of the difference between the solution $(u,v)$ of the hyperbolic
relaxation system \eqref{JinXin-1}--\eqref{JinXin-2} and the solution
$(\bar u,\bar v)$ of the convection-diffusion limit problem
\eqref{limit_1}--\eqref{limit_2}, namely
\begin{equation}\label{eq_ER_L2}
    \frac{\beta_0}{2}\left( |u-\bar u|^2+|v-\bar v|^2\right)\leq E(w|\bar w)\leq \frac{\beta_1}{2}\left( |u-\bar u|^2+|v-\bar v|^2\right).
  \end{equation}

The purpose of this article is to present similar results as in \cite{BBM16} for
convergence of solutions of \eqref{JinXin-1}--\eqref{JinXin-2} towards
solutions to \eqref{limit_1}--\eqref{limit_2}.
Section \ref{sec:continuous} is devoted to the continuous result. We first establish
a relative entropy identity. Then under
some regularity assumptions on the parabolic solutions, we establish a convergence result in relative entropy with the expected
convergence rate of $\varepsilon^2$.
The result is then extended to the semi-discrete level, by introducing a
discrete in space and continuous in time numerical scheme in Section \ref{sec:discrete}. In Section \ref{sec:rate} we construct a discrete relative entropy identity with numerical
residuals which we manage to control,  leading to the convergence
result with the expected convergence rate. The section \ref{sec:num} concludes with the details of the numerical result presented in Figure \ref{fig:linear}. In conclusion, we provide some perspectives for general nonlinear relaxation terms.

\section{The continuous setting}
\label{sec:continuous}
In this section, we first study the continuous case, adapting the diffusive relative entropy method developed in \cite{Lattanzio2013} to the case of the linear Jin-Xin relaxation system \eqref{JinXin-1}--\eqref{JinXin-2}.

We first establish an evolution law satisfied by the relative entropy \eqref{ER_cont}.

\begin{lmm}\label{lem_evolE_cont}
Let $w=(u,v)$ be a strong entropy solution of \eqref{JinXin-1}--\eqref{JinXin-2} and $\bar w=(\bar u,\bar v)$ be a smooth solution of the limit problem \eqref{limit_1}--\eqref{limit_2}. Then the relative entropy $E(w|\bar w)$, defined by \eqref{ER_cont}, satisfies the following evolution law:
\begin{equation}\label{evol_ER_cont}
    \p_t E(w|\bar w)+\p_x F(w|\bar w)=-\left(a(u-\bar u)-(v-\bar v)\right)^2+\left((v-\bar v)-a(u-\bar u)\right)\varepsilon^2\p_t \bar v,
\end{equation}
where the relative entropy flux is given by
\begin{equation}\label{FR_cont}
    F(w|\bar w)=-\frac{\lambda^2a}{2}(u-\bar u)^2-\frac{\varepsilon^2 a}{2}(v-\bar v)^2+\lambda^2(u-\bar u)(v-\bar v).
\end{equation}
\end{lmm}

\begin{proof}
Using the definition \eqref{ER_cont}, the time derivative of the relative entropy satisfies
\begin{align*}
    \p_t E(w|\bar w)=& \left[\lambda^2(u-\bar u)-\varepsilon^2 a(v-\bar v)\right]\p_t (u-\bar u)\\
    & +\left[(v-\bar v)-a(u-\bar u)\right]\varepsilon^2\p_t(v-\bar v).
\end{align*}
Remarking that the limit problem \eqref{limit_1}--\eqref{limit_2} can be written such that we get the same left hand side than for the Jin-Xin system \eqref{JinXin-1}--\eqref{JinXin-2}
\begin{align*}
    &\p_t \bar u+\p_x \bar v=0,\\
    & \varepsilon^2\p_t \bar v+\lambda^2\p_x \bar u=a \bar u-\bar v+\varepsilon^2\p_t \bar u,
\end{align*}
it yields
\begin{align*}
    \p_t E(w|\bar w)=&-\left[\lambda^2(u-\bar u)-\varepsilon^2 a(v-\bar v)\right]\p_x(v-\bar v)\\
    &-\left[(v-\bar v)-a(u-\bar u)\right]\lambda^2\p_x(u-\bar u)\\
    &-\left(a(u-\bar u)-(v-\bar v)\right)^2+\left((v-\bar v)-a(u-\bar u)\right)\varepsilon^2\p_t \bar v,
\end{align*}
which concludes the proof using the definition of the relative entropy flux \eqref{FR_cont}.
\end{proof}

In addition, we now suppose that the systems \eqref{JinXin-1}--\eqref{JinXin-2} and \eqref{limit_1}--\eqref{limit_2} are endowed with initial conditions such that the following limits hold: 
\begin{equation}
    \lim_{x\to\pm \infty}w(x,t)=\lim_{x\to\pm \infty}\bar w(x,t)=w_{\pm}, \label{CL_w}\\
\end{equation}
where $w_{\pm}$ are constant states.

Now, to compare $w$ solution of \eqref{JinXin-1}--\eqref{JinXin-2} and $\bar w$ solution of \eqref{limit_1}--\eqref{limit_2}, let us introduce the positive error estimate given by
\begin{equation}\label{phi_cont}
    \phi(t)=\int_\mathbb{R}E(w|\bar w)\,\dd x.
\end{equation}

The following convergence result, with an explicit rate, is established.

\begin{thrm}
  \label{thm:continuous}
Consider initial data $w_0$ for \eqref{JinXin-1}--\eqref{JinXin-2} and $\bar w_{0}$ for \eqref{limit_1}--\eqref{limit_2} such that $\phi(0)<+\infty$. Endowed with these initial data, let $\bar w$ be the smooth solution of \eqref{limit_1}--\eqref{limit_2} defined on $Q_T=\mathbb{R}\times[0,T)$, and $w$ be a strong entropy solution of \eqref{JinXin-1}--\eqref{JinXin-2}.
Let us assume that there exists $K>0$ such that $\| \p_t \bar v \|_{L^2(Q_T)}\leq K$, with $Q_T= \mathbb R \times [0,T)$. Then the following stability estimate holds
\begin{equation}\label{ineq_phi_cont}
    \phi(t)\leq  \phi(0)+\frac{K}{2}\varepsilon^4,   \quad t\in [0,T).
\end{equation}
Moreover, if $\phi(0)\to 0$ as $\varepsilon\to 0$, then 
\begin{equation}\label{conv_phi_cont}
  \sup_{t\in [0,T)}\phi(t)\to 0 \text{ as }\varepsilon\to 0.
\end{equation}
\end{thrm}

\begin{proof}
Using the limit assumptions \eqref{CL_w}, we first remark that $F(w|\bar w)\to 0$ in the limit $x\to\pm\infty$. Then, integrating \eqref{evol_ER_cont} on $\mathbb{R}\times [0,t)$, $t<T$, yields
\begin{align}
    \phi(t)-\phi(0)\leq & -\int_0^t\int_\mathbb{R}\left|a(u-\bar u)-(v-\bar v)\right|^2\,\dd x\dd s \nonumber\\
    & +\varepsilon^2\int_0^t\int_\mathbb{R}\left|(v-\bar v)-a(u-\bar u\right|\,|\p_t \bar v|\,\dd x\dd s.\label{estim_phi_1}
\end{align}
Concerning the last integral in this estimate, applying Cauchy-Schwarz and Young inequalities together with the assumption $\| \p_t \bar v \|_{L^2(Q_T)}\leq K$, we obtain
\begin{align*}
    \varepsilon^2\int_0^t\int_\mathbb{R}\left|(v-\bar v)-a(u-\bar u\right|\,|\p_t \bar v|\,\dd x\dd s &\leq \frac{1}{2}\int_0^t\int_\mathbb{R}\left|a(u-\bar u)-(v-\bar v)\right|^2\,\dd x\dd s\\
    & \quad +\frac{\varepsilon^4}{2}\int_0^t\int_\mathbb{R}|\p_t \bar v|^2\,\dd x\dd s\\
    & \leq \frac{1}{2}\int_0^t\int_\mathbb{R}\left|a(u-\bar u)-(v-\bar v)\right|^2\,\dd x\dd s +\frac{K}{2}\varepsilon^4.
\end{align*}
Then, inequality \eqref{estim_phi_1} becomes
\[ \phi(t)-\phi(0)\leq -\frac{1}{2}\int_0^t\int_\mathbb{R}\left|a(u-\bar u)-(v-\bar v)\right|^2\,\dd x\dd s +\frac{K}{2}\varepsilon^4, \]
which concludes the proof.
\end{proof}

\section{Semi-discrete finite volume scheme and numerical convergence rate}
\label{sec:discrete}
From a numerical point of view, the key ingredient is to consider a
numerical scheme for \eqref{JinXin-1}-\eqref{JinXin-2}
which provides the required
discretization of \eqref{limit_1}-\eqref{limit_2} in the limit of $\varepsilon$ to
zero.

Such schemes refer to \textit{Asymptotic Preserving Schemes}, notion
introduced by Jin in \cite{JinAP}. Such schemes 1) have to provide a
consistent discretization of the hyperbolic solutions of
\eqref{JinXin-1}-\eqref{JinXin-2} and
of the parabolic solutions of \eqref{limit_1}-\eqref{limit_2} at the
limit $\varepsilon \to 0$, 2) admit a CFL condition which  does not
degenerate as $\varepsilon\to 0$.

In the following, we focus on a scheme which is continuous in time and
discrete in space. Therefore the point 2) is not restrictive here.
However the numerical results given in the Introduction, see Figure
\ref{fig:linear}, have been obtained by an asymptotic preserving scheme, satisfying both
1) and 2), introduced in \cite{JPT98} and defined in
Section \ref{sec:num}.
Note that the relative entropy method has been applied to a full discrete scheme
by
Bulteau \textit{et al} \cite{Bulteau19}, for the $p$-system asymptotic
towards the porous media equation.

\subsection{Definition of the semi-discrete scheme}
\label{sec:scheme}
Let us consider a uniform mesh made of cells
$(x_{i-\frac{1}{2}},x_{i+\frac{1}{2}})_{i\in\mathbb Z}$ with uniform
size step $\Delta
x$. Here, the centers of cells are denoted $x_i=i\Delta x$ for
all $i\in\mathbb Z$. On each cell $(x_{i-\frac{1}{2}},x_{i+\frac{1}{2}})$,
the solutions of \eqref{JinXin-1}-\eqref{JinXin-2} are approximated by
time dependent
piecewise constant functions $w_i(t)={}^t(u_i(t),v_i(t))$.
The space discretization scheme is based
on the standard HLL numerical fluxes (see \cite{HLL}).
Hence the continuous in time and discrete in space numerical scheme reads
\begin{equation}
  \label{eq:Hd}
\left\{  \begin{aligned}
   \dfrac{d}{dt} u_i &= -\dfrac{1}{2\Delta x} \left(
      v_{i+1} - v_{i-1}\right) + \dfrac{\lambda}{2\Delta x}(u_{i+1}-2u_i+u_{i-1}
      ),\\
   \dfrac{d}{dt}v_i &=-\dfrac{\lambda^2}{2\varepsilon^2\Delta x}(u_{i+1}-u_{i-1})
    + \dfrac{\lambda}{2\Delta x}(v_{i+1} -
   2v_i+v_{i+1}) +\dfrac{1}{\varepsilon^2}(au_i-v_i).
  \end{aligned}\right.
\end{equation}

As soon as $\varepsilon$ goes to zero,
this finite volume scheme provides a consistent approximation of the
parabolic limit \eqref{limit_1}-\eqref{limit_2}: the pair ${\bar w}_i(t)={}^t(\bar u_i(t),\bar v_i(t))$ evolves in time as follows
 \begin{equation}
      \label{eq:Pd}
\left\{      \begin{aligned}
    &\dfrac{d}{dt} \bar u_i = -\dfrac{1}{2\Delta x} (
          \bar v_{i+1} - \bar v_{i-1} )+ \dfrac{\lambda}{2\,\Delta
            x}(\bar u_{i+1}-2 \bar u_i+\bar u_{i-1}
          ),\\
        &\dfrac{\lambda^2}{2 \Delta x} ( \bar u_{i+1}-\bar u_{i-1})
        =a\bar u_i - \bar v_i.
      \end{aligned}\right.
    \end{equation}

The numerical scheme is endowed with convenient 
limit conditions, in agreement with \eqref{CL_w} to be imposed to the approximate
solution as follows:
\begin{equation}
  \label{eq:CI_dis}
  \begin{aligned}
    \lim_{i\to \pm \infty} u_i &= \lim_{i\to \pm \infty} \bar u_i
    = u_{\pm},\\
    \lim_{i\to \pm \infty} v_i &= \lim_{i\to \pm \infty} \bar v_i
    =  v_{\pm}.
  \end{aligned}
\end{equation}

Finally, to simplify the forthcoming computations, we introduce some notations. Let $w(t)=(w_i(t))_{i\in\mathbb{Z}}$ be a function of time $t\in [0,T)$, piecewise constant on cells $(x_{i-\frac{1}{2}},x_{i+\frac{1}{2}})$. Then we define for $i\in\mathbb{Z}$
\begin{equation*}
    (D_xw)_{i+\frac{1}{2}}=\frac{u_{i+1}-u_{i}}{\Delta x},\quad (D_{xx}w)_i=\frac{w_{i+1}-2w_i+w_{i-1}}{\Delta x^2}.
\end{equation*}
We also introduce the following norm:
\[ \|w\|_{L^2(Q_t)}=\left(\int_0^t\sum_{i\in\mathbb{Z}}\Delta x\,|w_i(s)|^2\dd s\right)^{1/2}.\]

\subsection{Convergence rate}
\label{sec:rate}
We adapt the discrete relative entropy method of \cite{BBM16},
inspired of the continuous approach introduced by Lattanzio and Tzavaras
\cite{Lattanzio2013}, to the semi-discrete scheme (\ref{eq:Hd}).

First, according to the definition of the relative entropy given
by \eqref{ER_cont}, we define the discrete relative entropy function by
\begin{equation}    
  \label{eq:ERD}
  \begin{aligned}
    E_i(t)&=E(u_i, v_i|
     \bar u_i, \bar v_i)(t)\\
    &=\dfrac{\lambda^2}{2}(u_i(t)-\bar u_i(t))^2 +
    \dfrac{\varepsilon^2}{2}(v_i(t)-\bar v_i(t))^2 - \varepsilon^2 a
    (u_i(t)-\bar u_i(t))(v_i(t)-\bar v_i(t)).
  \end{aligned}
\end{equation}

Mimicking the continuous framework, we introduce $\phi(t)$
to denote the discrete space
integral of $E_i(t)$ as follows:
\begin{equation}
  \label{eq:phieddef}
  \phi(t) = \sum_{i \in \mathbb Z} \Delta x\; E_i(t).
\end{equation}
Without ambiguity and for the sake of clarity, the time dependence is omitted in the sequel.

Now, we state the discrete counterpart of Theorem \ref{thm:continuous}.
\begin{thrm}
  \label{thm:2}
  Let $\bar w_i(t)=(\bar u_i(t), \bar v_i(t))_{i \in \mathbb Z}$ be
    a smooth solution of \eqref{limit_1}-\eqref{limit_2} defined on $Q_T= \mathbb R
    \times [0,T)$. We assume the existence of a positive constant
      $K<+\infty$ such that the following estimations are satisfied:
\begin{equation}\label{hyp_vbar_dis}
    \left\|\frac{\dd}{\dd t}\bar v\right\|_{L^2(Q_T)}\leq K,\qquad \|D_{xx}\bar v\|_{L^2(Q_T)}\leq K.
\end{equation}
      Let $w_i(t)=( u_i(t),  v_i(t))_{i \in \mathbb Z}$ be a solution of
\eqref{eq:Hd} such that $\phi^\varepsilon(0) <+\infty$. We assume that the subcharacteristic condition \eqref{cond_sous_car} is fulfilled, as well as the assumptions on the limit conditions \eqref{eq:CI_dis}.
Then we have
    \begin{equation}
      \label{eq:rateD}
      \phi(t) \leq \phi(0) +
      B\,\varepsilon^4, \quad t\in [0,T),
    \end{equation}
where $B$ is a positive constant which depends only on $\lambda$ and $K$.
Moreover if $\phi(0)
\to 0$ as $\varepsilon\to 0$ then $\sup_{t\in[0,T)}\phi(t)
  \to 0$ when $\varepsilon\to 0$.
\end{thrm}

{Because the relative entropy behaves like a squared $L^2$ norm of $w-\bar w$, see \eqref{eq_ER_L2}, it follows from \eqref{eq:rateD} that the convergence rate of $\|w-\bar w\|^2_{L^2(Q_T)}$ behaves like $O(\varepsilon^4)$.}

To establish this convergence result, the first step is to 
exhibit the evolution equation satisfied by the relative entropy
$E_i$, namely the discrete counterpart of Lemma \ref{lem_evolE_cont}.

\begin{lemma} \label{lem_evolE_dis}
Let $(\bar u_i, \bar v_i)_{i \in \mathbb Z}$ be a smooth solution
of \eqref{eq:Pd} and let $(\tau_i,u_i)_{i \in \mathbb Z}$ be a
solution of \eqref{eq:Hd}. The relative entropy $E_i$,
defined by (\ref{eq:ERD}), verifies the following evolution law:
  \begin{equation}
    \label{eq:dtERD}
    \begin{aligned}
      \dfrac{\dd E_i}{\dd t} +\dfrac{1}{\Delta
        x}(F_{i+1/2}&-F_{i-1/2})=
      - \left[ a(u_i-\bar u_i) - (v_i-\bar v_i)\right]^2
      + \varepsilon^2 \left[a(u_i-\bar u_i)-(v_i-\bar v_i)
      \right]\dfrac{\mathrm d }{\mathrm d t} \bar v_i\\
      &+ R_i^1 + R_i^2+R_i^3+R_i^4,
    \end{aligned}
  \end{equation}
where $F_{i+1/2}$ corresponds to an approximation of the relative
entropy flux $F(w|\bar w)$ at the interface $x_{i+1/2}$ given by
\begin{equation}
  \label{eq:FD}
  \begin{aligned}
    F_{i+1/2} &=-\dfrac{\varepsilon^2}{2}a (v_i-\bar v_i)(v_{i+1} -
    \bar v_{i+1}) - \dfrac{\lambda^2}{2}a (u_{i} - \bar u_{i})
    (u_{i+1}-\bar u_{i+1}) \\
    &+ \dfrac{\lambda^2}{2} \left[ (u_i-\bar
      u_i) (v_{i+1}-\bar v_{i+1})
      + (u_{i+1}-\bar u_{i+1})(v_i-\bar
      v_i)\right],
  \end{aligned}
\end{equation}
and the quantities $R_i^j$, $j=1,\ldots,4$ denote numerical residuals given by
\begin{equation}
  \label{eq:Ri}
  \begin{aligned}
    R_i^1&=\frac{\lambda^3}{2}\Delta x(u_i-\bar u_i)\left(D_{xx}(u-\bar u)\right)_i,\\
    R_i^2&=\frac{\varepsilon^2\lambda}{2}\,\Delta x\, (v_i-\bar v_i)\left(D_{xx}(v-\bar v)\right)_i,\\
    R_i^3&=\frac{\varepsilon^2\lambda}{2}\,\Delta x\,\left[(v_i-\bar v_i)-a(u_i-\bar u_i)\right](D_{xx}\bar v)_i,\\
    R_i^4&=-\varepsilon^2\,a\frac{\lambda}{2}\,\Delta x\,\left[ (v_i-\bar v_i)\left(D_{xx}(u-\bar u)\right)_i+(u_i-\bar u_i)\left(D_{xx}(v-\bar v)\right)_i \right].
  \end{aligned}
\end{equation}
\end{lemma}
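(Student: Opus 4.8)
The plan is to mimic the continuous computation of Lemma~\ref{lem_evolE_cont} at the discrete level, the essential difference being that the HLL numerical viscosity terms of the scheme~\eqref{eq:Hd} do not cancel and must be gathered into the residuals $R_i^j$. First I would differentiate the discrete relative entropy~\eqref{eq:ERD} in time, obtaining
\begin{align*}
    \frac{\dd E_i}{\dd t}=&\left[\lambda^2(u_i-\bar u_i)-\varepsilon^2 a(v_i-\bar v_i)\right]\frac{\dd}{\dd t}(u_i-\bar u_i)\\
    &+\left[\varepsilon^2(v_i-\bar v_i)-\varepsilon^2 a(u_i-\bar u_i)\right]\frac{\dd}{\dd t}(v_i-\bar v_i).
\end{align*}
Exactly as in the continuous proof, I would rewrite the discrete limit scheme~\eqref{eq:Pd} so that its left-hand sides coincide with those of~\eqref{eq:Hd}: the first equation of~\eqref{eq:Pd} already has the required form, while the second one is rewritten as
\[
    \varepsilon^2\frac{\dd \bar v_i}{\dd t}+\frac{\lambda^2}{2\Delta x}(\bar u_{i+1}-\bar u_{i-1})-\frac{\varepsilon^2\lambda}{2\Delta x}\big(\text{viscous term in }\bar v\big)=a\bar u_i-\bar v_i+\varepsilon^2\frac{\dd \bar v_i}{\dd t}-\frac{\varepsilon^2\lambda}{2\Delta x}\big(\text{viscous term in }\bar v\big),
\]
so that subtracting it from~\eqref{eq:Hd} produces, besides the source and the "true" flux increments, extra terms involving $(D_{xx}\bar v)_i$. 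Substituting these expressions for $\frac{\dd}{\dd t}(u_i-\bar u_i)$ and $\frac{\dd}{\dd t}(v_i-\bar v_i)$ yields $\frac{\dd E_i}{\dd t}$ as a sum of: a source contribution, which after expanding the quadratic form gives exactly $-[a(u_i-\bar u_i)-(v_i-\bar v_i)]^2+\varepsilon^2[a(u_i-\bar u_i)-(v_i-\bar v_i)]\frac{\dd\bar v_i}{\dd t}$; centered-difference terms in $(u-\bar u)$ and $(v-\bar v)$ which I must reorganize into a discrete flux divergence $\frac{1}{\Delta x}(F_{i+1/2}-F_{i-1/2})$; and the numerical-viscosity terms.

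The main computational step, and the one I expect to be the genuine obstacle, is verifying that the centered first-difference contributions assemble precisely into the telescoping form $\frac{1}{\Delta x}(F_{i+1/2}-F_{i-1/2})$ with $F_{i+1/2}$ given by~\eqref{eq:FD}. The natural strategy here is the discrete product (summation-by-parts) identity: for piecewise constant sequences $a,b$ one has, at the level of the local balance,
\[
    a_i\,\frac{b_{i+1}-b_{i-1}}{2\Delta x}=\frac{1}{\Delta x}\left(\frac{a_i b_{i+1}+a_{i+1}b_i}{4}-\frac{a_{i-1}b_i+a_i b_{i-1}}{4}\right)-\frac{1}{\Delta x}\left(\text{symmetric remainder}\right),
\]
and I would need to check that the remainders coming from the $\frac{\lambda^2}{2}(u-\bar u)\,\partial_x(v-\bar v)$-type terms and from the $\lambda^2(v-\bar v)\,\partial_x(u-\bar u)$-type terms (with the correct sign from the rewritten scheme) cancel, leaving only the symmetric interface quantity written in~\eqref{eq:FD}. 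One should pay attention to the slightly asymmetric flux $\frac{\lambda^2}{2}[(u_i-\bar u_i)(v_{i+1}-\bar v_{i+1})+(u_{i+1}-\bar u_{i+1})(v_i-\bar v_i)]$, which is exactly the form that makes the divergence consistent with the continuous flux $\lambda^2(u-\bar u)(v-\bar v)$ while keeping the telescoping exact.

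Finally I would collect the leftover numerical-viscosity terms. The HLL viscosity $\frac{\lambda}{2\Delta x}(w_{i+1}-2w_i+w_{i-1})=\frac{\lambda\Delta x}{2}(D_{xx}w)_i$ appears in both equations of~\eqref{eq:Hd}; multiplying by the corresponding prefactors $[\lambda^2(u_i-\bar u_i)-\varepsilon^2a(v_i-\bar v_i)]$ and $\varepsilon^2[(v_i-\bar v_i)-a(u_i-\bar u_i)]$ and sorting by which variable carries the $D_{xx}$ produces exactly $R_i^1$ (the $\lambda^2(u-\bar u)\cdot\lambda(D_{xx}(u-\bar u))$ term, with coefficient $\lambda^3/2$), $R_i^2$ (the $\varepsilon^2(v-\bar v)\cdot\lambda(D_{xx}(v-\bar v))$ term), $R_i^4$ (the two cross terms carrying the factor $-\varepsilon^2 a\lambda/2$), while the term born from the rewriting of~\eqref{eq:Pd}, i.e. the $(D_{xx}\bar v)_i$ contribution multiplied by $\varepsilon^2[(v_i-\bar v_i)-a(u_i-\bar u_i)]$, is exactly $R_i^3$. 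Carefully tracking signs in this last bookkeeping is the only remaining subtlety; once it is done, identity~\eqref{eq:dtERD} follows.
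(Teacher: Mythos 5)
Your proposal is correct and follows essentially the same route as the paper's proof: differentiate $E_i$, rewrite the second equation of \eqref{eq:Pd} so that its left-hand side matches that of \eqref{eq:Hd} (the paper adds only $\varepsilon^2\frac{\dd}{\dd t}\bar v_i$ to both sides and splits $D_{xx}v=D_{xx}(v-\bar v)+D_{xx}\bar v$ at the very end, whereas you introduce the $\bar v$-viscosity at the rewriting stage --- an algebraically identical bookkeeping), then identify the source term, the exactly telescoping symmetric flux \eqref{eq:FD}, and the four residuals, whose attribution you get right. The only minor remark is that your caution about ``remainders'' in the summation-by-parts step is unnecessary: the centered differences $U_i(V_{i+1}-V_{i-1})+V_i(U_{i+1}-U_{i-1})$ and $U_i(U_{i+1}-U_{i-1})$ telescope exactly against the interface quantities $U_iV_{i+1}+U_{i+1}V_i$ and $U_iU_{i+1}$, with no leftover terms.
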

Observe that this evolution equation turns out to
be a discrete form of \eqref{evol_ER_cont} supplemented by
numerical viscosity terms.

\begin{proof}
According to the definition \eqref{eq:ERD}, the time derivative of the semi-discrete relative entropy $E_i$ reads
\begin{align}
   \frac{\dd E_i}{\dd t}=&\lambda^2(u_i-\bar u_i)\frac{\dd}{\dd t}(u_i-\bar u_i)+\varepsilon^2(v_i-\bar v_i)\frac{\dd}{\dd t}(v_i-\bar v_i) \nonumber \\
   &-\varepsilon^2a\frac{\dd}{\dd t}(u_i-\bar u_i)(v_i-\bar v_i)-\varepsilon^2 a (u_i-\bar u_i) \frac{\dd}{\dd t}(v_i-\bar v_i).\label{estim1Ed}
\end{align}
Now, we rewrite the second equation of the scheme \eqref{eq:Pd} as follows:
\[ \varepsilon^2\frac{\dd}{\dd t}\bar v_i=-\frac{\lambda^2}{2\Delta x}(\bar u_{i+1}-\bar u_{i-1})+(a\bar u_i-\bar v_i)+\varepsilon^2\frac{\dd}{\dd t}\bar v_i. \]
Using \eqref{eq:Hd}, we obtain
\begin{align*}
    \varepsilon^2\frac{\dd}{\dd t}(v_i-\bar v_i)=&-\frac{\lambda^2}{2\Delta x}\left[(u_{i+1}-\bar u_{i+1})-(u_{i-1}-\bar u_{i-1})\right]+\varepsilon^2\frac{\lambda}{2}\,\Delta x\,(D_{xx}v)_i\\
    &+\left[ a(u_i-\bar u_i)-(v_i-\bar v_i) \right]-\varepsilon^2\frac{\dd}{\dd t}\bar v_i.
\end{align*}
Plugging this equality in \eqref{estim1Ed} and using the first equations of \eqref{eq:Hd} and \eqref{eq:Pd} lead to
\begin{align*}
    \frac{\dd E_i}{\dd t}=&-\lambda^2(u_i-\bar u_i)\frac{1}{2\Delta x}\left[ (v_{i+1}-\bar v_{i+1})-(v_{i-1}-\bar v_{i-1}) \right]+\lambda^2(u_i-\bar u_i)\frac{\lambda}{2}\Delta x\left(D_{xx}(u-\bar u)\right)_i\\
    &-(v_i-\bar v_i)\frac{\lambda^2}{2\Delta x}\left[ (u_{i+1}-\bar u_{i+1})-(u_{i-1}-\bar u_{i-1}) \right]+(v_i-\bar v_i)\frac{\varepsilon^2\lambda}{2}\Delta x\left(D_{xx}v\right)_i\\
    &+(v_i-\bar v_i)\left[a(u_i-\bar u_i)-(v_i-\bar v_i)\right]-(v_i-\bar v_i)\varepsilon^2\frac{\dd}{\dd t}\bar v_i\\
    &+\varepsilon^2a(v_i-\bar v_i)\frac{1}{2\Delta x}\left[ (v_{i+1}-\bar v_{i+1})-(v_{i-1}-\bar v_{i-1}) \right]-\varepsilon^2a(v_i-\bar v_i)\frac{\lambda}{2}\Delta x\left(D_{xx}(u-\bar u)\right)_i\\
    &+a(u_i-\bar u_i)\frac{\lambda^2}{2\Delta x}\left[ (u_{i+1}-\bar u_{i+1})-(u_{i-1}-\bar u_{i-1}) \right]-\varepsilon^2 a (u_i-\bar u_i)\left(D_{xx}v\right)_i\\
    &-a(u_i-\bar u_i)\left[a(u_i-\bar u_i)-(v_i-\bar v_i)\right]+a(u_i-\bar u_i)\varepsilon^2\frac{\dd}{\dd t}\bar v_i.
\end{align*}
By rearranging the terms and using the definition \eqref{eq:FD} of the relative entropy flux, it yields
\begin{gather*}
    \frac{\dd E_i}{\dd t}+\frac{1}{\Delta x}(F_{i+\frac{1}{2}}-F_{i-\frac{1}{2}})=-\left[a(u_i-\bar u_i)-(v_i-\bar v_i)\right]^2+\left[a(u_i-\bar u_i)-(v_i-\bar v_i)\right]\varepsilon^2\frac{\dd}{\dd t}\bar v_i\\
    +\lambda^2(u_i-\bar u_i)\frac{\lambda}{2}\Delta x\left(D_{xx}(u-\bar u)\right)_i+(v_i-\bar v_i)\frac{\varepsilon^2\lambda}{2}\Delta x\left(D_{xx}v\right)_i\\
    -\varepsilon^2a(v_i-\bar v_i)\frac{\lambda}{2}\Delta x\left(D_{xx}(u-\bar u)\right)_i-\varepsilon^2a(u_i-\bar u_i)\frac{\lambda}{2}\Delta x\left(D_{xx}v\right)_i.
\end{gather*}
Writing $D_{xx}v$ as $D_{xx}(v-\bar v)+D_{xx}\bar v$, we are now able to identify the remainder terms $R_i^j$, $j=1,\ldots,4$, which concludes the proof.
\end{proof}
From now on, we state estimations satisfied by residuals $R_i^j$, $j=1,\ldots,4$.

\begin{lemma}\label{lem:estimR} 
Under the subcharacteristic condition \eqref{cond_sous_car}, for all $\theta>0$, we have the following estimates
\begin{align}
    \int_0^t\sum_{i\in\mathbb Z}\Delta x R_i^1\dd s&= -\frac{\lambda^3}{2}\Delta x\,\|D_x(u-\bar u)\|_{L^2(Q_t)}^2, \label{estimR1} \\
    \int_0^t\sum_{i\in\mathbb Z}\Delta x R_i^2\dd s&=-\frac{\varepsilon^2\lambda}{2}\Delta x\,\|D_x(v-\bar v)\|_{L^2(Q_t)}^2,\label{estimR2} \\
    \int_0^t\sum_{i\in\mathbb Z}\Delta x R_i^3\dd s&\leq \varepsilon^4\frac{\lambda^2}{8\theta}\Delta x^2\|D_{xx}\bar v\|_{L^2(Q_t)}^2+\frac{\theta}{2}\int_0^t\sum_{i\in\mathbb Z}\Delta x\left[(v_i-\bar v_i)-a(u_i-\bar u_i)\right]^2\dd s, \label{estimR3}\\
    \int_0^t\sum_{i\in\mathbb Z}\Delta x R_i^4\dd s&\leq \frac{\lambda^3}{2}\Delta x\,\|D_x(u-\bar u)\|_{L^2(Q_t)}^2+\frac{\lambda}{2}\varepsilon^2\,\Delta x\,\|D_x(v-\bar v)\|^2_{L^2(Q_t)}. \label{estimR4}
\end{align}
\end{lemma}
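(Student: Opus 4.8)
The plan is to estimate each of the four residuals separately, exploiting the discrete summation-by-parts identity
\[
\sum_{i\in\mathbb Z}\Delta x\, \varphi_i (D_{xx}\psi)_i = -\sum_{i\in\mathbb Z}\Delta x\, (D_x\varphi)_{i+\frac12}(D_x\psi)_{i+\frac12},
\]
valid whenever the discrete sequences decay at infinity (which follows from the limit conditions \eqref{eq:CI_dis}, since all the differences $u-\bar u$, $v-\bar v$ vanish as $i\to\pm\infty$). For $R_i^1$, I would apply this identity directly with $\varphi=\psi=u-\bar u$, picking up the exact sign $-\frac{\lambda^3}{2}\Delta x\,\|D_x(u-\bar u)\|^2_{L^2(Q_t)}$ after integrating in time; the factor $\Delta x$ survives from the explicit $\Delta x$ in the definition of $R_i^1$. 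The treatment of $R_i^2$ is identical with $\varphi=\psi=v-\bar v$ and the prefactor $\frac{\varepsilon^2\lambda}{2}$, again yielding an exact identity \eqref{estimR2}.

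For $R_i^4$, which couples $u-\bar u$ with $v-\bar v$, I would first apply summation by parts to each of the two terms, turning $(v_i-\bar v_i)(D_{xx}(u-\bar u))_i$ into $-(D_x(v-\bar v))(D_x(u-\bar u))$ and similarly for the other term; by symmetry both contribute the same cross term, so after integration $\int_0^t\sum_i\Delta x\,R_i^4\,\dd s = \varepsilon^2 a\lambda\,\Delta x \int_0^t\sum_i \Delta x\,(D_x(u-\bar u))_{i+\frac12}(D_x(v-\bar v))_{i+\frac12}\,\dd s$. Then I would bound the cross term by Young's inequality, choosing the weights so that the $u$-part is absorbed at level $\lambda^2$ times the $v$-part: specifically $|\varepsilon^2 a\lambda\, xy|\le \frac{\lambda^3}{2}x^2 + \frac{\varepsilon^4 a^2\lambda}{2\lambda^2}y^2 = \frac{\lambda^3}{2}x^2 + \frac{\varepsilon^2}{2}\cdot\frac{\varepsilon^2 a^2}{\lambda}\cdot\lambda\, y^2$, and the subcharacteristic condition $\lambda>\varepsilon|a|$ gives $\varepsilon^2 a^2/\lambda < \lambda$, hence the $v$-coefficient is bounded by $\frac{\lambda}{2}\varepsilon^2$, producing exactly \eqref{estimR4}. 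The point of this particular splitting is that the negative contributions from \eqref{estimR1} and \eqref{estimR2} will later exactly cancel the positive gradient terms produced here.

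For $R_i^3$, no summation by parts is needed: $R_i^3$ is already a product of $\big[(v_i-\bar v_i)-a(u_i-\bar u_i)\big]$ with $\frac{\varepsilon^2\lambda}{2}\Delta x\,(D_{xx}\bar v)_i$. I would apply Young's inequality with parameter $\theta$: the first factor is absorbed into $\frac{\theta}{2}\sum_i\Delta x\,[(v_i-\bar v_i)-a(u_i-\bar u_i)]^2$ — this is the term that will later be dominated by the negative dissipation term $-[a(u_i-\bar u_i)-(v_i-\bar v_i)]^2$ in \eqref{eq:dtERD} for $\theta$ small enough — while the second factor gives $\frac{1}{2\theta}\big(\frac{\varepsilon^2\lambda}{2}\Delta x\big)^2\|D_{xx}\bar v\|^2_{L^2(Q_t)} = \frac{\varepsilon^4\lambda^2}{8\theta}\Delta x^2\|D_{xx}\bar v\|^2_{L^2(Q_t)}$, matching \eqref{estimR3}.

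The only genuinely delicate points are bookkeeping ones: keeping track of the powers of $\Delta x$ (each $R_i^j$ carries one explicit $\Delta x$, and summation by parts does not change that, so the final bounds legitimately carry $\Delta x$ or $\Delta x^2$ and hence vanish under mesh refinement), and making the Young-inequality weights in $R_i^4$ precisely consistent with the subcharacteristic condition so that the $v$-gradient coefficient comes out as $\frac{\lambda}{2}\varepsilon^2$ rather than something larger. I expect the main obstacle — though it is more of a subtlety than a true difficulty — is verifying that the discrete summation by parts produces no boundary contributions; this rests on the decay \eqref{eq:CI_dis} of the differences at $i\to\pm\infty$, which must be invoked explicitly, exactly as the flux term $F(w|\bar w)\to0$ was invoked in the proof of Theorem \ref{thm:continuous}.
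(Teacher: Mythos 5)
Your proposal is correct and follows essentially the same route as the paper: discrete summation by parts (with no boundary terms thanks to \eqref{eq:CI_dis}) gives the exact identities \eqref{estimR1}--\eqref{estimR2} and reduces $R_i^4$ to the cross term $\varepsilon^2 a\lambda\,\Delta x\sum_i\Delta x\,(D_x(u-\bar u))_{i+\frac12}(D_x(v-\bar v))_{i+\frac12}$, which is then handled by Young's inequality combined with the subcharacteristic condition, while $R_i^3$ is treated by Cauchy--Schwarz/Young with parameter $\theta$. The only cosmetic difference is that you apply Young with explicit weights before invoking $\varepsilon|a|<\lambda$, whereas the paper first uses the subcharacteristic condition to split the prefactor as $\lambda^{3/2}\sqrt{\Delta x}\cdot\lambda^{1/2}\varepsilon\sqrt{\Delta x}$ and then applies Young symmetrically; both give exactly \eqref{estimR4}.
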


\begin{proof}
Equality \eqref{estimR1} (resp. \eqref{estimR2}) is directly obtained by summing $R_i^1$ (resp. $R_i^2$) over $i\in\mathbb Z$, applying a discrete integration by parts, and integrating with respect to time.

Then, by definition of $R_i^3$ \eqref{eq:Ri}, summing over $i\in\mathbb Z$, integrating with respect to $t$ and using the Cauchy-Schwarz inequality, we obtain
\begin{equation*}
    \int_0^t\sum_{i\in\mathbb Z}\Delta x R_i^3\dd s \leq \frac{\varepsilon^2\lambda}{2}\Delta x \left( \int_0^t\sum_{i\in\mathbb Z}\Delta x\left[ (v_i-\bar v_i)-a(u_i-\bar u_i) \right]^2\dd s \right)^{1/2}\,\|D_{xx}\bar v\|_{L^2(Q_t)}.
\end{equation*}
Finally, we apply the Young inequality with $\theta>0$ to get \eqref{estimR3}.

At last, we prove estimate \eqref{estimR4}. To do this, we first perform a discrete integration by parts and apply the Cauchy-Schwarz inequality to obtain
\begin{align*}
    \int_0^t\sum_{i\in\mathbb Z}\Delta x R_i^4\dd s &=  \varepsilon^2a\,\lambda\,\Delta x\int_0^t\sum_{i\in\mathbb Z}\Delta x \left(D_x(u-\bar u) \right)_{i+\frac{1}{2}}\left(D_x(v-\bar v) \right)_{i+\frac{1}{2}}\,\dd s\\
    &\leq \varepsilon^2 |a|\lambda\,\Delta x\,\|D_x(u-\bar u)\|_{L^2(Q_t)}\,\|D_x(v-\bar v)\|_{L^2(Q_t)}.
\end{align*}
But using the subcharacteristic condition, we have $\varepsilon |a|<\lambda$, which yields
\begin{equation*}
    \int_0^t\sum_{i\in\mathbb Z}\Delta x R_i^4\dd s\leq \lambda^{3/2}\sqrt{\Delta x}\,\|D_x(u-\bar u)\|_{L^2(Q_t)}\lambda^{1/2}\varepsilon\sqrt{\Delta x}\|D_x(v-\bar v)\|_{L^2(Q_t)},
\end{equation*}
and we finally get \eqref{estimR4} thanks to Young inequality.
\end{proof}

With these results, we can now establish the proof of Theorem \ref{thm:2}.

\begin{proof}[Proof of Theorem \ref{thm:2}]
First of all, let us remark that thanks to Lemma \ref{lem:estimR}, we
have 
\[\int_0^t\sum_{i\in\mathbb Z}\Delta x (R_i^1+R_i^2+R_i^4)\dd s \leq 0.\]
Then, integrating with respect to $t$ and summing for $i\in\mathbb Z$ the semi-discrete evolution law derived in Lemma \ref{lem_evolE_dis}, and using the estimate \eqref{estimR3}, we obtain
\begin{align*}
    \phi(t)-\phi(0) \leq & -\int_0^t\sum_{i\in\mathbb Z}\Delta x \left[ a(u_i-\bar u_i)-(v_i-\bar v_i)\right]^2\dd s+\varepsilon^2\int_0^t\sum_{i\in\mathbb Z}\Delta x \left[ a(u_i-\bar u_i)-(v_i-\bar v_i)\right]\frac{\dd \bar v_i}{\dd t}\dd s\\
    & +\varepsilon^4\frac{\lambda^2}{8\theta}\Delta x^2\|D_{xx}\bar v\|_{L^2(Q_t)}^2+\frac{\theta}{2}\int_0^t\sum_{i\in\mathbb Z}\Delta x\left[(v_i-\bar v_i)-a(u_i-\bar u_i)\right]^2\dd s.
\end{align*}
Applying Cauchy-Schwarz et Young inequalities with $\alpha>0$ on the second term of the right hand side, we get
\begin{align*}
    \phi(t)-\phi(0) \leq & \left(-1+\frac{\alpha}{2}+\frac{\theta}{2} \right)\int_0^t\sum_{i\in\mathbb Z}\Delta x \left[ a(u_i-\bar u_i)-(v_i-\bar v_i)\right]^2\dd s\\
    & + \varepsilon^4\left(\left\|\frac{\dd}{\dd t}\bar v\right\|^2_{L^2(Q_t)}+\frac{\lambda^2}{8\theta}\Delta x^2\|D_{xx}\bar v\|_{L^2(Q_t)}^2 \right).
\end{align*}
Choosing, for example, $\alpha=\frac{1}{2}=\theta$ concludes the proof thanks to assumptions \eqref{hyp_vbar_dis}.
\end{proof}

\subsection{Numerical experiments}
\label{sec:num}
The numerical results presented in Figure \ref{fig:linear} have been
obtained by a fully
discrete scheme as proposed by Jin, Pareschi and Toscani \cite{JPT98}. This
scheme is based on a reformulation of system \eqref{JinXin-1}-\eqref{JinXin-2}
\begin{equation*}\left\{\begin{aligned}
& \p_{t}u-\p_{x}v=0,\\
& \displaystyle{\p_{t}v+\lambda^2\p_{x}u=-\frac{1}{\varepsilon^{2}}\left(au-v-(1-\varepsilon^{2})\lambda^2\p_{x}u\right)}.
\end{aligned}\right.
\end{equation*}
This reformulated system is approximated by a 2-step splitting
technique.
In the first step,  the convective and non-stiff
system is approximated thanks to a classical HLL scheme
\cite{HLL}:
\begin{subequations}\label{scheme-JPT}
\begin{align}
u_{i}^{n+\frac{1}{2}}=u_{i}^{n}-\frac{\Delta t}{\Delta x}\left(\mathcal{F}_{i+\frac{1}{2}}^{u}-\mathcal{F}_{i-\frac{1}{2}}^{u}\right),\label{scheme-JPT-u-1}\\
v_{i}^{n+\frac{1}{2}}=v_{i}^{n}-\frac{\Delta t}{\Delta
  x}\left(\mathcal{F}_{i+\frac{1}{2}}^{v}-\mathcal{F}_{i-\frac{1}{2}}^{v}\right),\label{scheme-JPT-v-1}
\end{align}
\end{subequations}
where the numerical fluxes are defined by
\begin{align*}
&\mathcal{F}_{i+\frac{1}{2}}^{u}=\frac{1}{2}(v_{i}^{n}+v_{i+1}^{n})-\frac{\lambda}{2}(u_{i+1}^{n}-u_{i}^{n}),\\
&\mathcal{F}_{i+\frac{1}{2}}^{v}=\frac{\lambda^2}{2}(u_{i}^{n}+u_{i+1}^{n})-\frac{\lambda}{2}(v_{i+1}^{n}-v_{i}^{n}).
\end{align*}
This scheme is stable under the CFL condition
$\displaystyle{\frac{\Delta t}{\Delta x}\lambda\leq \frac{1}{2}}$ which does
not depend on $\varepsilon$. Next, the stiff source term is treated
in an implicit way
to obtain unconditional stability:
\begin{align*}
&u_{i}^{n+1}=u_{i}^{n+\frac{1}{2}},\\
& \frac{v_{i}^{n+1}-v_{i}^{n+\frac{1}{2}}}{\Delta t}=\frac{1}{\varepsilon^{2}}\left(au_{i}^{n+1}-v_{i}^{n+1}-(1-\varepsilon^{2})\lambda^2\frac{u_{i+\frac{1}{2}}^{n+1}-u_{i-\frac{1}{2}}^{n+1}}{\Delta x}\right),
\end{align*}
where we take $u_{i+\frac{1}{2}}^{n+1} = \dfrac{u_{i+1}^{n+1}+u_i^{n+1}}{2}$.

Since $u_{i}^{n+1}=u_{i}^{n+\frac{1}{2}}$, let us emphasize that
$v_{i}^{n+1}$ can be computed explicitly from
$(u_{i}^{n},v_{i}^{n})_{i\in\mathbb{Z}}$. Finally, the relaxation
step can be written as
\begin{subequations}\label{scheme-JPT2}
\begin{align}
&u_{i}^{n+1}=u_{i}^{n+\frac{1}{2}},\label{scheme-JPT-tau-2}\\
& v_{i}^{n+1}=\left(\frac{\varepsilon^{2}}{\varepsilon^{2}+\Delta
                                                                t}\right)v_{i}^{n+\frac{1}{2}}
                                                                +\dfrac{\Delta t}{\varepsilon^{2}+\Delta
                                                                t}\left(
                                                                au_{i}^{n+\frac{1}{2}}
                                                                -
                                                                (1-\varepsilon^2)\dfrac{\lambda^2}{2
                                                                \Delta
                                                                x}(u_{i+1}^{n+\frac{1}{2}}-u_{i-1}^{n+\frac{1}{2}}) \right). \label{scheme-JPT-u-2}
\end{align}
\end{subequations}


Considering the scheme \eqref{scheme-JPT}-\eqref{scheme-JPT2} in
the limit of $\varepsilon$ to zero provides a numerical scheme approximating the solutions of the
convection-diffusion problem \eqref{limit_1}-\eqref{limit_2}. It reads:
\begin{align}
    & \bar u_i^{n+1}=\bar u_i^n-\frac{\Delta t}{2\Delta x}(\bar v_{i+1}^n-\bar v_{i-1}^n)+\frac{\lambda\Delta t}{2\Delta x}(\bar u_{i+1}^n-2\bar u_i^n+\bar u_{i-1}^n), \label{scheme-JPT-ubar}\\
    & \bar v_i^{n+1}=a\bar u_i^{n+1}-\frac{\lambda^2}{2\Delta x}(\bar u_{i+1}^{n+1}-\bar u_{i-1}^{n+1}). \label{scheme-JPT-vbar}
\end{align}

The numerical results presented in Figure \ref{fig:linear}-top correspond
to a Riemann initial condition
$u(0,x) = 2\cdot \mathbf 1_{x<0}+  \mathbf 1_{x\geq 0}$, the initial
condition $v(0,x)$ is set to local equilibrium, namely $v(0,x) =a
u(0,x)$.
The final time of computation is $T=0.1$, the domain $[0,1]$ is
discretized with $N_x=200$ cells and the CFL parameter is set to 0.95.
The wave speed is $\lambda=0.72$, $a=0.5$ and the relaxation parameter
$\varepsilon$ is set to 1, according
to the subcharacteristic condition \eqref{cond_sous_car}. Hence 
the profiles of $(u,v)$ present the two waves of speed $\pm \lambda$
and are strongly mollified by the relaxation term. On the other hand
the asymptotic limit $\bar u$ corresponds to the solution of the
parabolic equation \eqref{limit_1} and $\bar v$ to \eqref{limit_2}.

The convergence rate presented in Figure \ref{fig:linear}-bottom has been
computed with 
different values of $\varepsilon$ from $10^{-1}$ to $0.5\times
3.10^{-3}$ with the same initial condition and various space discretizations.
The log-log scale figure asses the convergence rate in
$O(\varepsilon^4)$, which is in agreement with theorems
\ref{thm:continuous} and \ref{thm:2}.

\section{Conclusion and prospects}

In this work, we establish the convergence of the solution of the linear Jin and
Xin model to a solution of the convection-diffusion equation
obtained in the limit $\varepsilon\to 0$ by a relative entropy
method.
The estimates give
a rate of convergence in $O(\varepsilon^4)$ , which is found
numerically.
Moreover, the
technique adapted to the discrete setting again gives the same rate of
convergence for a class of finite volume schemes that are discrete in space and
continuous in time.

Actually the estimate holds for the nonlinear case, considering a
relaxation term of type $f(u)-v$, with nonlinear function $f$. In this
case, relative entropy computations are more technical than in the
linear case. This is because the entropy of the Jin and Xin system
with nonlinear $f$ is not explicit, see \cite{Serre2000}. In fact, the relative
entropy identity contains terms that are more difficult to handle at
both continuous and discrete levels.

However the same rate of convergence in $\varepsilon^4$ can be
observed. As a numerical evidence, we present in Figure
\ref{fig:nonlinear} the numerical results for the Jin and Xin model
for $f(u)=u^2/2$. Figure \ref{fig:nonlinear}-top presents the profile of $(u,v)$
for the system \eqref{JinXin-1}-\eqref{JinXin-2}. The initial data is
at local equilibrium namely  $u(0,x) = 2\cdot \mathbf 1_{x<0}+  \mathbf
1_{x\geq 0}$ and $v(0,x)=f(u(0,x))$. One sets 
$\varepsilon=1$ and $\lambda=3$ in order to satisfy the subcharacteristic
condition. The others parameters are identical to the linear test case.
Figure \ref{fig:nonlinear}-bottom shows the behaviour of
$\phi$ with respect to $\varepsilon$ in log-log
scale.

A natural perspective is to adapt the relative entropy method to exhibit the
convergence rate in the nonlinear case. 

\begin{figure}
\centering
\includegraphics[width=0.75\linewidth]{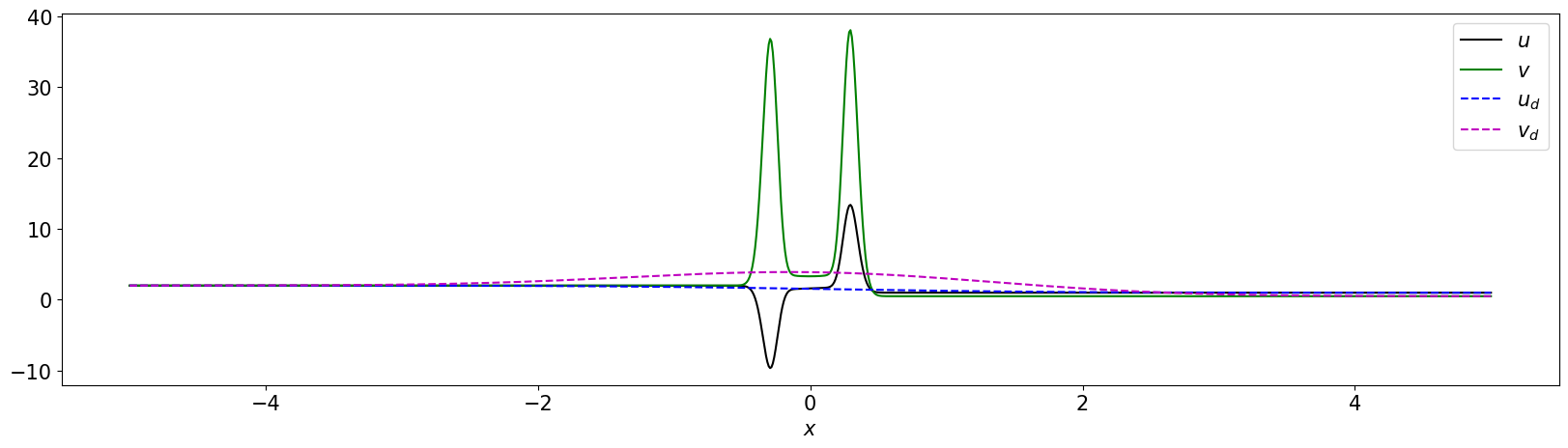}

\includegraphics[width=0.75\linewidth]{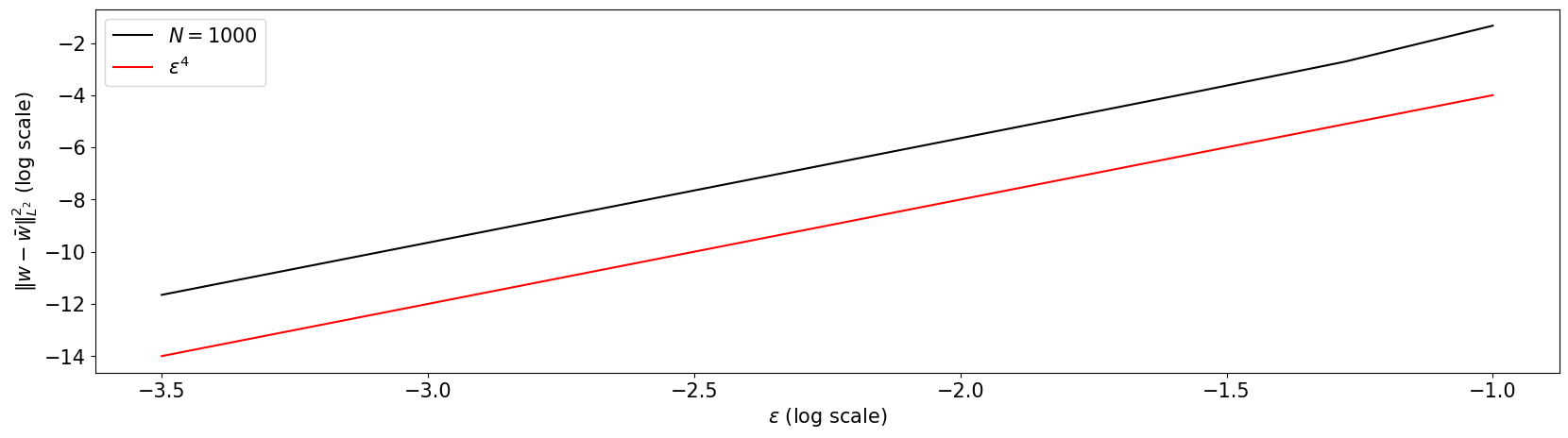}
\caption{Nonlinear test case. Top: profile of $(u,v)$ in space, compared to $(\bar u,\bar v)$.
  Bottom: $L^2$ norm of the error $\|(u,v)-(\bar u, \bar v)\|_{L^2(Q_T)}^2$ with respect to $\varepsilon$ in log scale.}
\label{fig:nonlinear}
\end{figure}

\bibliographystyle{plainurl}
\bibliography{biblio_lim_diff}

\begin{thebibliography}{10}

\bibitem{BBM16}
C.~Berthon, M.~Bessemoulin-Chatard, and H.~Mathis.
\newblock Numerical convergence rate for a diffusive limit of hyperbolic
  systems: {$p$}-system with damping.
\newblock {\em SMAI J. Comput. Math.}, 2:99--119, 2016.
\newblock \href {https://doi.org/10.5802/smai-jcm.10}
  {\path{doi:10.5802/smai-jcm.10}}.

\bibitem{Bianchini18}
R.~Bianchini.
\newblock Uniform asymptotic and convergence estimates for the {J}in-{X}in
  model under the diffusion scaling.
\newblock {\em SIAM J. Math. Anal.}, 50(2):1877--1899, 2018.
\newblock \href {https://doi.org/10.1137/17M1152395}
  {\path{doi:10.1137/17M1152395}}.

\bibitem{Bianchini21}
R.~Bianchini.
\newblock Relative entropy in diffusive relaxation for a class of discrete
  velocities {BGK} models.
\newblock {\em Commun. Math. Sci.}, 19(1):39--54, 2021.
\newblock \href {https://doi.org/10.4310/CMS.2021.v19.n1.a2}
  {\path{doi:10.4310/CMS.2021.v19.n1.a2}}.

\bibitem{Bouchut99}
F.~Bouchut.
\newblock Construction of {BGK} models with a family of kinetic entropies for a
  given system of conservation laws.
\newblock {\em J. Statist. Phys.}, 95(1-2):113--170, 1999.
\newblock \href {https://doi.org/10.1023/A:1004525427365}
  {\path{doi:10.1023/A:1004525427365}}.

\bibitem{Bulteau19}
S.~Bulteau, C.~Berthon, and M.~Bessemoulin-Chatard.
\newblock Convergence rate of an asymptotic preserving scheme for the diffusive
  limit of the {$p$}-system with damping.
\newblock {\em Commun. Math. Sci.}, 17(6):1459--1486, 2019.
\newblock \href {https://doi.org/10.4310/CMS.2019.v17.n6.a1}
  {\path{doi:10.4310/CMS.2019.v17.n6.a1}}.

\bibitem{CCL94}
G.~Q. Chen, C.~D. Levermore, and T.~P. Liu.
\newblock Hyperbolic conservation laws with stiff relaxation terms and entropy.
\newblock {\em Comm. Pure Appl. Math.}, 47(6):787--830, 1994.

\bibitem{Christoforou21}
C.~Christoforou.
\newblock The relative entropy method for inhomogeneous systems of balance
  laws.
\newblock {\em Quart. Appl. Math.}, 79(2):201--227, 2021.
\newblock \href {https://doi.org/10.1090/qam/1577}
  {\path{doi:10.1090/qam/1577}}.

\bibitem{Christoforou18}
C.~Christoforou and A.~E. Tzavaras.
\newblock Relative entropy for hyperbolic-parabolic systems and application to
  the constitutive theory of thermoviscoelasticity.
\newblock {\em Arch. Ration. Mech. Anal.}, 229(1):1--52, 2018.
\newblock \href {https://doi.org/10.1007/s00205-017-1212-2}
  {\path{doi:10.1007/s00205-017-1212-2}}.

\bibitem{Crin-Barat2023}
T.~Crin-Barat and L.-Y. Shou.
\newblock Diffusive relaxation limit of the multi-dimensional {J}in-{X}in
  system.
\newblock {\em J. Differential Equations}, 357:302--331, 2023.
\newblock \href {https://doi.org/10.1016/j.jde.2023.02.015}
  {\path{doi:10.1016/j.jde.2023.02.015}}.

\bibitem{Daf79}
C.~M. Dafermos.
\newblock The second law of thermodynamics and stability.
\newblock {\em Arch. Rational Mech. Anal.}, 70(2):167--179, 1979.
\newblock URL: \url{http://dx.doi.org/10.1007/BF00250353}, \href
  {https://doi.org/10.1007/BF00250353} {\path{doi:10.1007/BF00250353}}.

\bibitem{DiP79}
R.~J. DiPerna.
\newblock Uniqueness of solutions to hyperbolic conservation laws.
\newblock {\em Indiana Univ. Math. J.}, 28(1):137--188, 1979.
\newblock URL: \url{http://dx.doi.org/10.1512/iumj.1979.28.28011}, \href
  {https://doi.org/10.1512/iumj.1979.28.28011}
  {\path{doi:10.1512/iumj.1979.28.28011}}.

\bibitem{Ghoshal21}
S.~S. Ghoshal, A.~Jana, and K.~Koumatos.
\newblock On the uniqueness of solutions to hyperbolic systems of conservation
  laws.
\newblock {\em J. Differential Equations}, 291:110--153, 2021.
\newblock \href {https://doi.org/10.1016/j.jde.2021.04.034}
  {\path{doi:10.1016/j.jde.2021.04.034}}.

\bibitem{HLL}
A.~Harten, P.~D. Lax, and B.~van Leer.
\newblock On upstream differencing and {G}odunov-type schemes for hyperbolic
  conservation laws.
\newblock {\em SIAM Rev.}, 25(1):35--61, 1983.

\bibitem{JinAP}
S.~Jin.
\newblock Efficient asymptotic-preserving ({AP}) schemes for some multiscale
  kinetic equations.
\newblock {\em SIAM J. Sci. Comput.}, 21(2):441--454 (electronic), 1999.
\newblock URL: \url{http://dx.doi.org/10.1137/S1064827598334599}, \href
  {https://doi.org/10.1137/S1064827598334599}
  {\path{doi:10.1137/S1064827598334599}}.

\bibitem{JinLiu1998}
S.~Jin and H.~Liu.
\newblock Diffusion limit of a hyperbolic system with relaxation.
\newblock {\em Methods Appl. Anal.}, 5(3):317--334, 1998.
\newblock \href {https://doi.org/10.4310/MAA.1998.v5.n3.a6}
  {\path{doi:10.4310/MAA.1998.v5.n3.a6}}.

\bibitem{JPT98}
S.~Jin, L.~Pareschi, and G.~Toscani.
\newblock Diffusive relaxation schemes for multiscale discrete-velocity kinetic
  equations.
\newblock {\em SIAM J. Numer. Anal.}, 35(6):2405--2439, 1998.
\newblock URL: \url{http://dx.doi.org/10.1137/S0036142997315962}, \href
  {http://arxiv.org/abs/http://dx.doi.org/10.1137/S0036142997315962}
  {\path{arXiv:http://dx.doi.org/10.1137/S0036142997315962}}, \href
  {https://doi.org/10.1137/S0036142997315962}
  {\path{doi:10.1137/S0036142997315962}}.

\bibitem{JinXin1995}
S.~Jin and Z.~P. Xin.
\newblock The relaxation schemes for systems of conservation laws in arbitrary
  space dimensions.
\newblock {\em Comm. Pure Appl. Math.}, 48(3):235--276, 1995.
\newblock \href {https://doi.org/10.1002/cpa.3160480303}
  {\path{doi:10.1002/cpa.3160480303}}.

\bibitem{Lattanzio2013}
C.~Lattanzio and A.E. Tzavaras.
\newblock Relative entropy in diffusive relaxation.
\newblock {\em SIAM J. Math. Anal.}, 45(3):1563--1584, 2013.
\newblock URL: \url{http://dx.doi.org/10.1137/120891307}, \href
  {http://arxiv.org/abs/http://dx.doi.org/10.1137/120891307}
  {\path{arXiv:http://dx.doi.org/10.1137/120891307}}, \href
  {https://doi.org/10.1137/120891307} {\path{doi:10.1137/120891307}}.

\bibitem{Natalini96}
R.~Natalini.
\newblock Convergence to equilibrium for the relaxation approximations of
  conservation laws.
\newblock {\em Comm. Pure Appl. Math.}, 49(8):795--823, 1996.
\newblock \href
  {https://doi.org/10.1002/(SICI)1097-0312(199608)49:8<795::AID-CPA2>3.0.CO;2-3}
  {\path{doi:10.1002/(SICI)1097-0312(199608)49:8<795::AID-CPA2>3.0.CO;2-3}}.

\bibitem{Seguin18}
N.~Seguin.
\newblock Stability of stationary solutions of singular systems of balance
  laws.
\newblock {\em Confluentes Math.}, 10(2):93--112, 2018.
\newblock \href {https://doi.org/10.5802/cml.52} {\path{doi:10.5802/cml.52}}.

\bibitem{Serre2000}
D.~Serre.
\newblock Relaxations semi-lin\'{e}aire et cin\'{e}tique des syst\`emes de lois
  de conservation.
\newblock {\em Ann. Inst. H. Poincar\'{e} C Anal. Non Lin\'{e}aire},
  17(2):169--192, 2000.
\newblock \href {https://doi.org/10.1016/S0294-1449(99)00105-5}
  {\path{doi:10.1016/S0294-1449(99)00105-5}}.

\bibitem{vasseur23}
A.~F. Vasseur.
\newblock A review of recent applications of the relative entropy method to
  discontinuous solutions of conservation laws.
\newblock {\em Quart. Appl. Math.}, 81(3):553--565, 2023.

\bibitem{Whitham74}
G.~B. Whitham.
\newblock {\em Linear and nonlinear waves}.
\newblock Pure and Applied Mathematics. Wiley-Interscience [John Wiley \&
  Sons], New York-London-Sydney, 1974.

\end{thebibliography}

\end{document}